\newtheorem{theorem}{Theorem}[section]
\theoremstyle{definition}
\theoremstyle{remark}
\newtheorem{remark}[theorem]{Remark}
\numberwithin{equation}{section}
\title{A generalization of Chu-Vandermonde's Identity}
\author{Seyed Saeed Naghibi}
\address{Dept. of Computer Science and Information Technology, Institute for Advanced Studies in Basic Sciences (IASBS),
Zanjan, Iran}
\email{ngsaeed@iasbs.ac.ir}
\author{Mohsen Hooshmand}
\address{Dept. of Computer Science and Information Technology, Institute for Advanced Studies in Basic Sciences (IASBS),
Zanjan, Iran}
\email{mohsen.hooshmand@iasbs.ac.ir}
\begin{document}
\begin{abstract}
We present and prove a general form of  Vandermonde's identity and use it as an alternative solution to a classic probability problem.
\end{abstract}

\maketitle

\section{Introduction}
Vandermonde's identity, its diverse proofs, and applications have been the spot of research throughout the centuries. Nowadays, it is a classic identity whose proof is available in most combinatorics-related books~\cite{knuth94}. However, there are still some efforts to give new proofs for this well-established identity, e.g.,~\cite{spivey16}. In addition to its original form proof, some specific types of its generalization have been proposed. Some works concentrated on its higher dimension~\cite{yaacov17}. Some others put their efforts into its complex~\cite{mestrovic18} or coefficients of its numbers~\cite{knuth94}.  

However, our focus was not generalizing this identity directly. We were doing some exercises in Stochastic Process and reached a classic problem during its teaching, which we put in this paper with an alternative solution in section~\ref{seq:app}. To have a formal proof of it, we reached an equation that we later realized could be considered a generalization of \textit{Vandermonde's identity}. 

This article gives a general form of Vandermonde's identity, including first order and higher orders of this identity. Then, we proof the mentioned classic problem, borrowed from Ross book~\cite{ross10} using the first order of general Vandermonde's identity. Lastly, we briefly discuss some of our thoughts about this proof.

\section{The Vandermonde's identity General form.}
At first, we encountered the first order of Vandermonde's identity. We prove it as follows.

\begin{theorem}[First Order of Vandermonde's Identity General Form.]\label{1storder} Let assume $k\leq m$ and $r-k\leq n$, then 
$\sum_{k=0}^{r}{k{m \choose k}{n \choose r-k}} = m {m+n-1 \choose r-1}
    \label{eq:mainFormula}$, where ${x \choose y}=\frac{x!}{(x-y)!y!}$, and $x,y\in  \mathbb{N}$ .
\end{theorem}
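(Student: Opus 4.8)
The plan is to reduce the claimed identity to the classical Chu--Vandermonde identity $\sum_{j=0}^{s}\binom{a}{j}\binom{b}{s-j}=\binom{a+b}{s}$ by means of the absorption (or committee/chairperson) identity $k\binom{m}{k}=m\binom{m-1}{k-1}$. This absorption identity is the natural tool because the factor $k$ appearing in the summand is exactly what obstructs a direct application of classical Vandermonde; moving that $k$ into the binomial coefficient and pulling the constant $m$ outside the sum is the whole idea.

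Concretely, I would first rewrite each term of the left-hand side using $k\binom{m}{k}=m\binom{m-1}{k-1}$, obtaining
\[
\sum_{k=0}^{r}k\binom{m}{k}\binom{n}{r-k}
= m\sum_{k=0}^{r}\binom{m-1}{k-1}\binom{n}{r-k}.
\]
Next I would perform the index shift $j=k-1$. The $k=0$ term contributes $\binom{m-1}{-1}=0$, so it drops out harmlessly, and the remaining sum runs over $j=0,\dots,r-1$, giving
\[
m\sum_{j=0}^{r-1}\binom{m-1}{j}\binom{n}{(r-1)-j}.
\]

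Finally I would apply classical Vandermonde with upper parameters $m-1$ and $n$ and total $r-1$, which evaluates the inner sum to $\binom{(m-1)+n}{r-1}=\binom{m+n-1}{r-1}$, yielding the right-hand side $m\binom{m+n-1}{r-1}$ and completing the proof. The only point demanding care — the ``main obstacle'' such as it is — is bookkeeping at the boundaries: confirming that the lower endpoint $k=0$ genuinely vanishes after the shift and that the hypotheses $k\le m$ and $r-k\le n$ keep every binomial coefficient within its valid range so that no spurious terms are introduced or dropped. Beyond this indexing caution the argument is a short, direct reduction with no genuine combinatorial difficulty.
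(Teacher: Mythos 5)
Your proof is correct, but it takes a genuinely different route from the paper's. You reduce the identity to the classical Chu--Vandermonde identity by means of the absorption identity $k\binom{m}{k}=m\binom{m-1}{k-1}$ followed by the index shift $j=k-1$; the paper instead argues by generating functions: it writes $m(x+1)^{m+n-1}=m(x+1)^{m-1}(x+1)^{n}$, expands the first factor as $m(1+x)^{m-1}=\sum_{i=0}^{m} i\binom{m}{i}x^{i-1}$ (the derivative of the binomial expansion), forms the Cauchy product of the two series, and compares coefficients of $x^{r-1}$ with the direct binomial expansion of $m(x+1)^{m+n-1}$. Your argument is shorter and more elementary, at the price of invoking classical Vandermonde as a known black box (the paper's coefficient comparison re-derives that case implicitly as $l=0$). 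The paper's setup has the advantage of extending verbatim to the general form of Theorem~\ref{generalVI} by taking $l$-th derivatives, but your method scales equally well: iterating absorption gives $\frac{k!}{(k-l)!}\binom{m}{k}=\frac{m!}{(m-l)!}\binom{m-l}{k-l}$, after which classical Vandermonde with total $r-l$ again finishes the job. Your care at the boundary --- checking that the $k=0$ term vanishes via the convention $\binom{m-1}{-1}=0$ --- is exactly the right point to flag, and it mirrors the step in the paper where the $r=0$ term of the Cauchy product is discarded as $0$.
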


\begin{proof} 
Let start with 
\begin{equation*}
    \begin{aligned}
        m \left( x+1 \right)^{m+n-1} & = m \left( x+1 \right)^{m-1} \left( x+1 \right)^{n}\\
    \end{aligned}
\end{equation*}

By $m \left({1+x}\right) ^{m-1} = \sum_{i=0}^{m}{i{m \choose i} x^{i-1}}$~\cite{Koh} we have,

\begin{equation*}
    \begin{aligned}
       & = \sum_{i=0}^{m}{i{m \choose i} x^{i-1}} \sum_{j=0}^{n}{{n \choose j} x^{j}}.\\
    \end{aligned}
\end{equation*}
We get factor of $\frac{1}{x}$ from $x^{i-1}$ in the above equation; thus, we have
\begin{equation*}
    \begin{aligned}
       & = \frac{1}{x}\sum_{i=0}^{m}{i{m \choose i} x^{i}} \sum_{j=0}^{n}{{n \choose j} x^{j}}.\\
    \end{aligned}
\end{equation*}
Using \textit{polynomial ring}, $\sum_{i=0}^{m}{a_{i}{x^{i}}} \sum_{j=0}^{n}{b_{j}x^{j}} = 
    \sum_{r=0}^{m+n}{{ \left( \sum_{k=0}^{r}{a_{k}b_{r-k}} \right) }x^{r}}$ ~\cite{whitelow}, we convert the above formula as follows.  
\begin{equation*}
    \begin{aligned}
       & = \frac{1}{x}\sum_{r=0}^{m+n}{{ \left( \sum_{k=0}^{r}{k{m \choose k}{n \choose r-k}} \right) }x^{r}}\\
       & = \sum_{r=0}^{m+n}{{ \left( \sum_{k=0}^{r}{k{m \choose k}{n \choose r-k}} \right) }x^{r-1}}\\
        & = 0 + \sum_{r=1}^{m+n}{{ \left( \sum_{k=0}^{r}{k{m \choose k}{n \choose r-k}} \right) }x^{r-1}}
    \end{aligned}
\end{equation*}
, or
\begin{equation}
    m \left( x+1 \right)^{m+n-1} = \sum_{r=1}^{m+n}{{ \left( \sum_{k=0}^{r}{k{m \choose k}{n \choose r-k}} \right) }x^{r-1}}.
    \label{eq:firstExpansion}
\end{equation}
The equation~\ref{eq:firstExpansion} is our first expansion. On the other hand, we consider \textit{binomial expansion},
\begin{equation*}
    \left( x+1 \right)^{m+n-1} = \sum_{r=0}^{m+n-1}{{m+n-1 \choose r}}x^{r}.
\end{equation*}
By changing the index variable, the equation is
\begin{equation*}
    = \sum_{r=1}^{m+n}{{m+n-1 \choose r-1}}x^{r-1}.
\end{equation*}
Multiplying both sides with $m$ 
\begin{equation}
    m \left( x+1 \right)^{m+n-1} = \sum_{r=1}^{m+n}{m{m+n-1 \choose r-1}}x^{r-1}
    \label{eq:binomialexp}
\end{equation}
The left sides of equation~\ref{eq:firstExpansion} and equation~\ref{eq:binomialexp} are equal. Thus, 
\begin{equation*}
    \begin{aligned}
        \sum_{r=1}^{m+n}{{ \left( \sum_{k=0}^{r}{k{m \choose k}{n \choose r-k}} \right) }x^{r-1}}= \sum_{r=1}^{m+n}{m{m+n-1 \choose r-1}}x^{r-1}
    \end{aligned}
\end{equation*}
, and results in
\begin{equation}
    \begin{aligned}
        \sum_{k=0}^{r}{k{m \choose k}{n \choose r-k}} = m {m+n-1 \choose r-1}.
    \end{aligned}
    \label{eq:rightSidesAreEqual}
\end{equation}
\end{proof}

\textbf{Theorem}~\ref{1storder} is a leading part of an alternative solution to the problem in section~\ref{seq:app}.

However, going further, we realized that the \textbf{Theorem}~\ref{1storder} is a particular case of a more general form. Thus, we introduce the general form of \textit{Vandermonde's identity} as the below theorem. 

\begin{theorem}[General Form of Vandermonde's Identity] \label{generalVI} Let assume $l\leq k$, $k\leq m$ and $r-k\leq n$ then,
\begin{equation}
\sum_{k=0}^{r}{{k \brack l}{m \choose k}{n \choose r-k}} = {m \brack l} {m+n-l \choose r-l}
\label{eq:generalizedFormula}
\end{equation}
, where ${x \brack y}=\frac{x!}{(x-y)!}$ and ${x \choose y}=\frac{x!}{(x-y)!y!}$, and $x,y\in  \mathbb{N}$.
\end{theorem}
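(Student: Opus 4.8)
The plan is to repeat the strategy of Theorem~\ref{1storder} verbatim, replacing the single derivative by the $l$-th derivative. That earlier proof rests on the identity $m(1+x)^{m-1}=\sum_i i{m\choose i}x^{i-1}$, which is nothing but $(1+x)^m=\sum_i{m\choose i}x^i$ differentiated once. Differentiating $l$ times instead gives, on one hand, $\frac{d^l}{dx^l}(1+x)^m={m\brack l}(1+x)^{m-l}$, and on the other hand $\sum_{i=0}^m{i\brack l}{m\choose i}x^{i-l}$, since applying $\frac{d^l}{dx^l}$ to the monomial $x^i$ produces precisely the falling factorial ${i\brack l}x^{i-l}$. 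Equating these yields the generalized seed identity ${m\brack l}(1+x)^{m-l}=\sum_{i=0}^m{i\brack l}{m\choose i}x^{i-l}$, which collapses to the Koh identity from~\cite{Koh} when $l=1$.

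First I would write ${m\brack l}(1+x)^{m+n-l}={m\brack l}(1+x)^{m-l}(1+x)^n$ and substitute the seed identity together with the ordinary binomial expansion of $(1+x)^n$, obtaining the product $\left(\sum_i{i\brack l}{m\choose i}x^{i-l}\right)\left(\sum_j{n\choose j}x^j\right)$. As in the first-order argument I would pull out the common factor $x^{-l}$, apply the Cauchy product rule for polynomials quoted from~\cite{whitelow}, and collect coefficients, reaching the first expansion
\begin{equation*}
{m\brack l}(1+x)^{m+n-l}=\sum_{r=0}^{m+n}\left(\sum_{k=0}^{r}{k\brack l}{m\choose k}{n\choose r-k}\right)x^{r-l}.
\end{equation*}

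For the second expansion I would expand $(1+x)^{m+n-l}$ by the binomial theorem, multiply through by ${m\brack l}$, and read off that the coefficient of $x^{r-l}$ on the right-hand side is ${m\brack l}{m+n-l\choose r-l}$. Equating the coefficients of $x^{r-l}$ in the two expansions then gives exactly \eqref{eq:generalizedFormula}, completing the argument.

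The main obstacle is bookkeeping at the low end of the summation rather than anything conceptual. Factoring out $x^{-l}$ is legitimate only because no genuine negative powers of $x$ survive: for $0\le k\le l-1$ the falling factorial ${k\brack l}=k(k-1)\cdots(k-l+1)$ contains a zero factor and hence vanishes, so every surviving monomial $x^{k-l}$ already has $k\ge l$ and a nonnegative exponent. I would make this vanishing explicit, noting that the formula ${x\brack y}=\frac{x!}{(x-y)!}$ must be read as the falling factorial and taken to be $0$ when $x<y$, since it is precisely what guarantees that both sides are honest polynomials and thereby licenses the term-by-term comparison of coefficients.
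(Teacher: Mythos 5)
Your proposal is correct and follows essentially the same route as the paper's own proof: take the $l$-th derivative of $(1+x)^m$ to get ${m\brack l}(1+x)^{m-l}=\sum_i{i\brack l}{m\choose i}x^{i-l}$, multiply by $(1+x)^n$, apply the Cauchy product, and equate coefficients with the binomial expansion of ${m\brack l}(1+x)^{m+n-l}$. Your explicit remark that ${k\brack l}$ vanishes for $k<l$, which justifies dropping the low-index terms and reading ${x\brack y}$ as a falling factorial, is a small but worthwhile clarification that the paper leaves implicit.
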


\begin{proof}
Let $f(x)= (1 + x)^{m}$. its binomial expansion is
\begin{equation*}
    f(x) = (1 + x)^{m} = \sum_{i=0}^{m}{{m \choose i}x^{i}}.
\end{equation*}

Then, utilizing induction, its $l$-th derivative is 
\begin{equation}
    f^{\left( l \right)}(x) = {m \brack l} (1+x)^{m-l} = \sum_{i=0}^{m}{{i \brack l}{m \choose i}x^{i-l}}.
    \label{eq:mthderivative} 
\end{equation}
Now, we start with ${m \brack l} \left( x+1\right)^{m+n-l}$. We have 
\begin{equation*}
    \begin{aligned}
        {m \brack l} \left( x+1 \right)^{m+n-l} & = {m \brack l} \left( x+1 \right)^{m-l} \left( x+1 \right)^{n}\\
     \end{aligned}
\end{equation*}
 using equation~\ref{eq:mthderivative} and in a similar way to the corresponding steps of \textbf{Theorem}~\ref{1storder}, we utilize the \textit{polynomial ring} and the equation is as follows.      
\begin{equation*}
    \begin{aligned} 
        & = \sum_{i=0}^{m}{{i \brack l}{m \choose i} x^{i-l}} \sum_{j=0}^{n}{{n \choose j} x^{j}}\\
        & = \sum_{r=0}^{m+n}{{ \left( \sum_{k=0}^{r}{{k \brack l}{m \choose k}{n \choose r-k}} \right) }x^{r-l}}\\
        & = 0 + \sum_{r=l}^{m+n}{{ \left( \sum_{k=0}^{r}{{k \brack l}{m \choose k}{n \choose r-k}} \right) }x^{r-l}}
    \end{aligned}
\end{equation*}
, or
\begin{equation}
    \begin{aligned} 
        {m \brack l} \left( x+1 \right)^{m+n-l} & = \sum_{r=l}^{m+n}{{ \left( \sum_{k=0}^{r}{{k \brack l}{m \choose k}{n \choose r-k}} \right) }x^{r-l}}
    \end{aligned}
    \label{eq:gen1stExpansion}
\end{equation}

On the other hand, we use binomial expansion and have

\begin{equation*}
    \left( x+1 \right)^{m+n-l} = \sum_{r=0}^{m+n-l}{{m+n-l \choose r}}x^{r}
    \label{eq:gen2ndExpansion}
\end{equation*}

changing the index variable and multiplying both sides with ${m \brack l}$

\begin{equation}
    {m \brack l} \left( x+1 \right)^{m+n-l} = \sum_{r=l}^{m+n}{{m \brack l}{m+n-l \choose r-l}}x^{r-l}
    \label{eq:gen2ndExpansion}
\end{equation}

The left-hand sides of both equations~\ref{eq:gen1stExpansion} and~equation \ref{eq:gen2ndExpansion} are the same, so the right-hand side of both equations are the same as well, or simply 
\begin{equation}
    \begin{aligned}
    \sum_{k=0}^{r}{{k \brack l}{m \choose k}{n \choose r-k}} = {m \brack l} {m+n-l \choose r-l}.
    \end{aligned} 
\end{equation}
\end{proof}

\begin{remark}
In \textbf{Theorem}~\ref{generalVI}, if
\begin{enumerate}
\item $l=0$, then it is equivalent with Vandermonde's identity, or 
\begin{equation}
    \begin{aligned}
    \sum_{k=0}^{r}{{m \choose k}{n \choose r-k}} =  {m+n \choose r};
    \end{aligned} 
\end{equation}
\item $l=1$, then it is equivalent with first order of general order of Vandermonde's identity;
\item $l>1$, then it is equivalent to higher order of general form of Vandermonde's identity.
\end{enumerate}
\end{remark}
\section{Application.}~\label{seq:app} In this section, we present the query which led us to face the first order of Vandermonde's identity general form. We prove the problem using \textbf{Theorem}~\ref{1storder}.
\subsection{Statement of the Problem.}

``An urn has $r$ red and $w$ white balls that are randomly removed one at a time. Let $R_{i}$ be the event that the $i$th ball removed is red. Find $P\left( R_{i} \right)$''~\cite{ross10}.

\textbf{Solution.} The answer is $\frac{r}{r+w}$. \textit{Also sprach Ross} ``... each of the $r+w$ balls is equally to be the $i$th ball removed''. In addition to the intuition introduced by Sheldon Ross, we would like to have a direct alternative formal proof of the answer. Thus, we tried to prove it, and here it is.
\begin{proof}

Let assume $r_{j}$ denotes choosing of $j$ red balls in $i-1$ previous steps. Then,

\begin{equation*}
        P\left(R_{i}\right) = \sum_{j=0}^{i-1}{P\left( R_{i} \cap r_{j} \right)}\\
\end{equation*} 
By applying \textit{Law of total probability} we have, 
\begin{equation*}
        = \sum_{j=0}^{i-1}{P\left( R_{i} | r_{j} \right) P\left(r_{j}\right) } 
\end{equation*}
$P\left(r_{j}\right)$, as mentioned above, is equal to choosing $j$ red balls among the $i-1$ balls that we have already picked. Thus, its value is equal to $\frac{{ r \choose j}{ w \choose i-1-j}}{{r+w \choose i-1}}$.  In addition, $P\left( R_{i} | r_{j}\right)$ denotes choosing a red ball in the $i$th step. Because, we have already picked $j$ red balls in the previous steps, its value is equal to  $\frac{{r-j \choose 1}}{{r+w-\left( i-1 \right) \choose 1}}$. Consequently, we have,

\begin{equation*}   
\begin{aligned}
       & = \sum_{j=0}^{i-1}{\left(\frac{{r-j \choose 1}}{{r+w-\left( i-1 \right) \choose 1}} \frac{{ r \choose j}{ w \choose i-1-j}}{{r+w \choose i-1}}\right)} \\
       &= \sum_{j=0}^{i-1}{\left(\frac{(r-j)}{(r+w-i+1)} \frac{{ r \choose j}{ w \choose i-1-j}}{{r+w \choose i-1}}\right)}\\
         & = \frac{1}{ \left(r+w-i+1\right) {r+w \choose i-1}} \sum_{j=0}^{i-1}{\left((r-j) { r \choose j}{ w \choose i-1-j}\right)} \\
      &  = \frac{1}{\left(r+w-i+1\right) {r+w \choose i-1}}
        \left(\sum_{j=0}^{i-1}{r{ r \choose j}{ w \choose i-1-j}} - \sum_{j=0}^{i-1}{j{ r \choose j}{ w \choose i-1-j}}\right)\\
\end{aligned}
\end{equation*}
The $\sum_{j=0}^{i-1}{j{ r \choose j}{ w \choose i-1-j}}$ term in the above formula is in accordance with \textbf{Theorem}~\ref{1storder}. Thus, we proceed with it as follows,
\begin{equation*}
\begin{aligned}
       & = \frac{1}{\left(r+w-i+1\right) {r+w \choose i-1}}
        \left(r{r+w \choose i-1} - r {r+w-1 \choose i-2 } \right)\\
       & = \frac{r}{\left(r+w-i+1\right) {r+w \choose i-1}}
        \left({r+w \choose i-1} -  {r+w-1 \choose i-2 } \right)\\
      &  = \frac{r}{\left(r+w-i+1\right) {r+w \choose i-1}}
        \left({r+w \choose i-1} - \frac{i-1}{r+w}{r+w \choose i-1 } \right)\\
     &   = \frac{r{r+w \choose i-1 }}{\left(r+w-i+1\right) {r+w \choose i-1}}
        \left( 1 - \frac{i-1}{r+w} \right) \\
      &  = \frac{r}{\left(r+w-i+1\right)}
        \left( \frac{\left(r+w-i+1\right)}{r+w} \right)\\
      &  = \frac{r}{r+w}.
    \end{aligned}
    \label{eq:ithRedBallProbability}
\end{equation*}
\end{proof}

\section{Final Thoughts} There have been many efforts to expand or extend Vandermonde's identity or Vandermonde's convolution. For example, Graham et al.~\cite{knuth94} defined some general forms of identities, including Vandermonde's identity, in chapter~5. But, that generalization is different from our point of view. They are a generalization of Vandermonde's identity, discussing the coefficients of $m$ or $n$ in the equation. The same happens in works such as~\cite{fang07, mestrovic18 }. Yaccov~\cite{yaacov17} investigated the determinant of the Vandermonde in a higher dimension. However, to our knowledge, we proposed this type of generalization of Vandermonde's identity for the first time, or at least we have given a new way of this generalization proof. In our proposal, we consider the general form of Vandermonde's identity from the point of view of Combinatorics' coefficients. It is worth mentioning that it is possible to observe our proposed generalization as a higher derivative of Vandermonde's identity.


\bibliographystyle{amsplain}

\end{document}